\documentclass[reqno,11pt]{amsart}
\usepackage[letterpaper,margin=1in,footskip=0.25in]{geometry}
\usepackage[utf8]{inputenc}
\usepackage{mathptmx}
\usepackage[cal=boondoxupr]{mathalfa}
\usepackage{mathrsfs}
\usepackage{setspace, amssymb, amsopn, amsmath, array, pdfsync, url, amsfonts, float,enumitem}
\usepackage{xcolor}
\definecolor{chianti}{rgb}{0.6,0,0}
\definecolor{meretale}{rgb}{0,0,.6}
\definecolor{leaf}{rgb}{0,.35,0}

\usepackage[colorlinks=true, hyperindex, citecolor=meretale, urlcolor=leaf, linkcolor=chianti]{hyperref}
\usepackage{tikz, tikz-cd}
\usepackage{fancyvrb,newverbs}
\usepackage[capitalise]{cleveref}
\usepackage{multicol}
\usepackage{booktabs}

\definecolor{cverbbg}{gray}{0.93}

\allowdisplaybreaks

\newtheorem{Theoremx}{Theorem}
 
\newtheorem{theorem}{Theorem}[section]
\theoremstyle{definition}

\theoremstyle{definition}

\newtheorem{lemma}[theorem]{Lemma}

\newtheorem{proposition}[theorem]{Proposition}
\newtheorem{corollary}[theorem]{Corollary}

\newtheorem{conjecture}[theorem]{Conjecture}
\theoremstyle{definition}
\newtheorem{definition}[theorem]{Definition}
\newtheorem{remark}[theorem]{Remark}
\theoremstyle{remark}

\renewcommand{\ker}{\operatorname{ker}}

\newcommand{\Spec}{\operatorname{Spec}}
\newcommand{\ts}{\textsuperscript}

\usepackage{stmaryrd}

\newcommand{\Cl}{\operatorname{Cl}}

\newcommand{\Soc}{\operatorname{Soc}}
\newcommand{\Hom}{\operatorname{Hom}}

\newcommand{\Char}{\operatorname{char}}

\newcommand{\Frac}{\operatorname{Frac}}

\newcommand{\Z}{\mathbb{Z}}
\newcommand{\F}{\mathbb{F}}

\newcommand{\fm}{\mathfrak{m}}
\newcommand{\fp}{\mathfrak{p}}
\newcommand{\fq}{\mathfrak{q}}
\newcommand{\fa}{\mathfrak{a}}
\newcommand{\fb}{\mathfrak{b}}
\newcommand{\fc}{\mathfrak{c}}

\newcommand{\fn}{\mathfrak{n}}

\newcommand{\fQ}{\mathfrak{Q}}

\newcommand{\rperf}{R_{\operatorname{perf}}}
\newcommand{\rperfd}{R_{\operatorname{perfd}}}

\newcommand{\przero}{\operatorname{pr}_0}

\makeatother

\crefname{Theoremx}{Theorem}{Theorems}
\crefname{setting}{Setting}{Settings}

\makeatletter
\renewcommand*{\eqref}[1]{%
  \hyperref[{#1}]{\textup{\tagform@{\ref*{#1}}}}%
}
\makeatother

\begin{filecontents*}{mrnumber.dbx}
\DeclareDatamodelFields[type=field,datatype=verbatim]{mrnumber}
\DeclareDatamodelEntryfields{mrnumber}
\end{filecontents*}

\usepackage[backend=biber, style=numeric, datamodel=mrnumber,sorting=anyt,maxalphanames=5,maxbibnames=99]{biblatex}
\addbibresource{references.bib}
\usepackage{hyperref}
\usepackage{xurl}
\hypersetup{breaklinks=true}

\DeclareFieldFormat{mrnumber}{%
  MR\addcolon\space
  \ifhyperref
    {\href{http://www.ams.org/mathscinet-getitem?mr=#1}{\nolinkurl{#1}}}
    {\nolinkurl{#1}}}

\renewbibmacro*{doi+eprint+url}{%
  \iftoggle{bbx:doi}
    {\printfield{doi}}
    {}%
  \newunit\newblock
  \printfield{mrnumber}%
  \newunit\newblock
  \iftoggle{bbx:eprint}
    {\usebibmacro{eprint}}
    {}%
  \newunit\newblock
  \iftoggle{bbx:url}
    {\usebibmacro{url+urldate}}
    {}}

\renewbibmacro{in:}{}

\begin{document}

\title{On deformation of perfectoid purity in Gorenstein domains}

\author[Baily]{Benjamin Baily}
\thanks{Baily was supported by NSF RTG grant DMS \#1840234}
\address{Department of Mathematics, University of Michigan, Ann Arbor, MI 48109 USA}
\email{bbaily@umich.edu}
\urladdr{\url{https://bbaily.github.io}}

\author[Dovgodko]{Karina Dovgodko}
\address{Department of Mathematics, Columbia University, New York, NY 10027 USA}
\email{kmd2235@columbia.edu}

\author[Simpson]{Austyn Simpson}
\thanks{Simpson was supported by NSF postdoctoral fellowship DMS \#2202890.}
\address{Department of Mathematics, University of Michigan, Ann Arbor, MI 48109 USA}
\email{austyn@umich.edu}
\urladdr{\url{https://austynsimpson.github.io}}

\author[Westbrook]{Jack Westbrook}
\address{Department of Mathematics, University of Wisconsin, Madison, WI 53706}
\email{jswestbrook@wisc.edu}

\begin{abstract}
    If $(R,\fm)$ is a complete local ring of mixed characteristic $(0,p)$ and $R/pR$ is an $F$-pure Gorenstein domain, we find a sufficient condition for $R$ to be perfectoid pure. This condition is related to the Cohen--Macaulayness of the absolute integral closures of Gorenstein local domains of mixed characteristic which are not necessarily excellent. Along the way, we show that the problem of lifting $F$-purity of $R/pR$ to perfectoid purity of $R$ is equivalent to a similar deformation problem for the splinter property.
\end{abstract}

\maketitle

\section{Introduction}
Perfectoid purity was recently proposed in \cite{BMPSTWW24} as a mixed characteristic analogue of $F$-purity in prime characteristic $p>0$ and log canonicity in equal characteristic zero. A noetherian ring $R$ whose Jacobson radical contains the prime integer $p$ is said to be \emph{perfectoid pure} if there exists a pure map $R\to B$ where $B$ is a perfectoid $R$-algebra. It is shown in \emph{op. cit.} that this notion enjoys many of the properties that a counterpart to $F$-purity ought to, such as weak normality \cite[Corollary 4.29]{BMPSTWW24} and ascent along cyclic covers \cite[Proposition 5.8]{BMPSTWW24}.

The goal of this article is to investigate conditions under which the property of being perfectoid pure \emph{deforms}; that is, whether a local ring $(R,\fm)$ with $p\in\fm$ is perfectoid pure provided that $R/xR$ is perfectoid pure for some nonzero divisor $x\in\fm$. This is settled already in some cases such as \cite[Theorem 6.6]{BMPSTWW24} and \cite[Theorem A]{Yos25} which both require the ring $R$ to be a complete intersection. An affirmative answer to the deformation problem is expected merely assuming that $R$ is Gorenstein in light of the analogous fact for $F$-purity \cite{Fed83}. We consider in this article an instance of this conjecture when the nonzero divisor is the prime integer $p$ itself.
\begin{conjecture}\label{conj:F-pure}
    Let $(R,\fm, k)$ be a complete local Gorenstein ring of mixed characteristic $(0,p)$. If $R/pR$ is an $F$-pure domain, then $R$ is perfectoid pure.
\end{conjecture}

Much like many deformation arguments for local singularity types, the proof in the complete intersection setting leverages that a certain object---in this case the perfectoidization $\rperfd$---is Cohen--Macaulay in the sense that $H^i_\fm(\rperfd)=0$ for all $i<\dim(R)$ \cite[Theorem 4.24]{BMPSTWW24}. We find a relationship between \cref{conj:F-pure} and the vanishing of local cohomology of a different algebra, namely that of the absolute integral closure $R^+$. Note that $\rperfd$ in this context is a derived object whereas $R^+$ is a discrete ring.

\begin{conjecture}\label{conj:CM}
    Let $(R,\fm,k)$ be a $d$-dimensional noetherian (not necessarily excellent) local Gorenstein domain of mixed characteristic $(0,p)$. Then $H^i_\fm(R^+)=0$ for all $i<d$.
\end{conjecture}

Our main theorem is the following relationship between these conjectures.

\begin{Theoremx} (= \cref{thm:perfd-pure-deformation})\label{thm:main-theorem-1}
    \cref{conj:CM} implies \cref{conj:F-pure}.
\end{Theoremx}

Some remarks are in order regarding the evidence for \cref{conj:CM}. The conclusion is never satisfied for local rings of dimension at least three containing the rationals, see \cite[page 55]{HH92}. However, this vanishing is enjoyed by $R^+$ for excellent local domains $R$ containing $\F_p$ \cite[Theorem 1.1]{HH92}. This result was subsequently extended to include all local domains which are homomorphic images of Gorenstein (resp. Cohen--Macaulay) rings in \cite[Corollary 2.3]{HL07} (resp. \cite[Corollary 3.2]{Quy16}). Currently the best progress in mixed characteristic is Bhatt's result \cite[Theorem 1.1]{Bha21} which states that $H^i_\fm(R^+)=0$ for all $i<\dim R$ when $R$ is an excellent local domain of mixed characteristic $(0,p)$. It thus seems plausible that \emph{all} Gorenstein local domains of mixed characteristic have this property, and indeed the question of whether this holds in the level of generality of either \cite{HL07} or \cite{Quy16} is posed in \cite[Question 5.3]{Bha21}.

We emphasize that \cref{conj:F-pure} concerns \emph{complete} local rings, while \cref{conj:CM} only remains unresolved outside of the excellent scenario. This points to some surprising features of our proof that we briefly summarize. We show that if $R/pR$ is a Gorenstein $F$-pure local domain, then $R\llbracket x\rrbracket$ contains a local subring $A$ such that:
\begin{enumerate}[label=(\alph*)]
    \item $\widehat{A} \cong R\llbracket x\rrbracket$,
    \item $pR\cap A = pA$, and 
    \item $A/pA$ is \emph{weakly $F$-regular}.
\end{enumerate}

Weakly $F$-regular rings (i.e., rings in which all ideals are tightly closed) satisfy the splinter condition, a characteristic-independent notion prescribing that any map to a finite cover splits. Granting \cref{conj:CM}, this local Gorenstein (non-excellent) ring $A$ will also be a splinter (\cref{lem:splinter lifts to splinter}) and hence perfectoid pure (\cref{thm:splinter-implies-perfd-pure}). Using results from \cite{BMPSTWW24}, perfectoid purity can then be ascended along the completion $A\to R\llbracket x\rrbracket$ and then descended along $R\to R\llbracket x\rrbracket$.

Our proof of \cref{thm:main-theorem-1} also shows that, irrespective of \cref{conj:CM}, the problem of deforming $F$-purity of $T/pT$ to perfectoid purity of the complete ring $T$ is equivalent to deforming weak $F$-regularity to perfectoid purity in Gorenstein domains which are not assumed to be excellent:

\begin{conjecture}\label{conj:WFR}
    Let $(R,\fm)$ be a local Gorenstein ring of mixed characteristic $(0,p)$. If $R/pR$ is weakly $F$-regular and analytically irreducible, then $R$ is perfectoid pure.
\end{conjecture}
We expect an even stronger conclusion to hold than what is stated above, namely that $R$ is a splinter; indeed, this is implied by \cref{conj:CM} (see \cref{lem:splinter lifts to splinter}). Nevertheless, we have:

\begin{Theoremx} (= \cref{thm:conj1 iff conj2})\label{thm:main-theorem-2}
\cref{conj:F-pure} is equivalent to \cref{conj:WFR}.
\end{Theoremx}

The proof strategy outlined above adds to a long-running program pioneered by Lech \cite{Lec86} and Heitmann \cite{Hei93} aiming to show that a complete local ring which lacks some mildness property---in this case the splinter condition---arises as the completion of a ring \emph{with} that property.

Finally, we caution the reader about relaxing the assumptions of \cite[Theorem 6.6]{BMPSTWW24} or those of \cref{conj:F-pure} much further beyond the Gorenstein case. Indeed, there are examples of Fedder \cite{Fed83} and Singh \cite{Sin99b,Sin99a} demonstrating that $F$-purity does not deform, although there are positive deformation results provided that the canonical module $\omega_R$ is torsion in the class group $\Cl(R)$ \cite{PS23,PST25}. Note also that the analytic irreducibility assumption in \cref{conj:WFR} is irredundant, as there are examples of Gorenstein (weakly) $F$-regular domains whose completions fail to be domains by \cite[Theorem C]{LS25}.

\subsection*{Some conventions}
By a \emph{quasi-local} ring $(R,\fm)$, we mean a commutative unital ring with unique maximal ideal $\fm$. A \emph{local ring} is a quasi-local ring which is also noetherian. For a local ring $(R,\fm)$, we denote by $R^\circ$ the elements of $R$ which are not contained in any minimal prime. The \emph{punctured spectrum} of $R$, denoted $\Spec^\circ(R)$, refers to the nonmaximal prime ideals of $R$. If $R$ is an $\F_p$-algebra, then $F^e:R\to R$ denotes the $e$\ts{th} iterated Frobenius endomorphism $r\to r^{p^e}$.

\subsection*{Acknowledgments} We are grateful to Susan Loepp and Karl Schwede for helpful discussions, and we thank Ryo Ishizuka and Thomas Polstra for comments on an earlier draft. We also thank Linquan Ma and Karl Schwede for pointing out an inaccuracy in a previous draft.
\vspace{-.2cm}
\section{Preliminaries}
We review the various singularity types that we will consider; these include $F$-purity and (weak) $F$-regularity in prime characteristic, the splinter condition in arbitrary characteristic, and perfectoid purity in mixed characteristic. Recall that if $R$ is a commutative ring then a map $N_1\to N_2$ of $R$-modules is \emph{pure} if the induced map $M\otimes_R N_1\to M\otimes_R N_2$ is injective for every $R$-module $M$. The map $N_1\to N_2$ is said to be \emph{cyclically pure} if for every ideal $I\subseteq R$, the map $N_1/IN_1\to N_2/IN_2$ is injective. Pure maps are clearly always cyclically pure and the two notions coincide in all settings of interest in this article \cite{Hoc77}.

Let $p>0$ be a prime integer. A reduced noetherian $\F_p$-algebra $R$ is \emph{$F$-pure} if the inclusion $R\hookrightarrow R^{1/p}$ is pure, i.e. if for any $R$-module $M$, the induced map $M\to M\otimes_R R^{1/p}$ is injective. It is straightforward to see that if $R$ is $F$-pure and $I\subseteq R$ is an ideal then $I =I^F$, where the \emph{Frobenius closure} $I^F$ of $I$ is the ideal consisting of elements $r\in R$ such that $r^{q} \in F^e(I)R$ for all sufficiently large $q =p^e$. If $I\subseteq R$ is an ideal, then the \emph{tight closure of $I$}, denoted $I^*$, is the ideal consisting of elements $r\in R$ for which there exists $c\in R^\circ$ such that $cr^q\in F^e(I)R$ for all sufficiently large $q=p^e$. We say that $R$ is \emph{weakly $F$-regular} if $I = I^*$ for all ideals $I\subseteq R$. If $R$ is a domain, the containment $r\in I^*$ is equivalent to the existence of a nonzero $c\in R$ such that $cr^q\in F^e(I)R$ for \emph{all} $q=p^e$ by \cite[Theorem 1.3(b)]{Hun96}.

A noetherian ring $R$ is called a \emph{splinter} if each module-finite map $R\to S$ inducing a surjection $\Spec S\twoheadrightarrow \Spec R$ admits an $R$-linear left inverse. Local splinters are always normal domains \cite[Lemma 2.1.1]{DT23}, and a noetherian domain $R$ is a splinter if and only if the map $R\to R^+$ is cyclically pure \cite[Lemma 2.3.1]{DT23}. Here, $R^+$ denotes the \emph{absolute integral closure} of $R$, i.e. the integral closure of $R$ in a choice of algebraic closure $\overline{K}$ of the fraction field $K=\Frac(R)$.

If $(R,\fm,k)$ is a local domain and $E:=E_R(k)$ is an injective hull of the residue field $k$ of $R$, then by \cite[Lemma 2.1(e)]{HH95} $R$ is a splinter if and only if $E\to E\otimes_R R^+$ is injective, provided that $R$ is approximately Gorenstein (e.g. if $R$ is Gorenstein or if $\widehat{R}$ is reduced). If $R$ has prime characteristic $p>0$, then similarly $R$ is $F$-pure if and only if $E\to E\otimes_R R^{1/p}$ is injective.

We now briefly recall the notion of a perfectoid ring following \cite{BIM19}, as well as that of perfectoid purity. Again let $p>0$ be a prime integer. The \emph{tilt} of a commutative ring $R$, denoted $R^\flat$, is the perfect ring of characteristic $p$ obtained via the inverse limit $\varprojlim R/pR$ taken over the maps $r\to r^p$. If $R$ is $p$-adically complete, then there is a unique lift $\theta$ of the natural projection $\przero:R^\flat\to R/pR$ yielding the diagram
\begin{equation*}
    \begin{tikzcd}
        W(R^\flat)\arrow[r,dotted,"\theta"]\arrow[d,twoheadrightarrow] & R\arrow[d,twoheadrightarrow]\\
        R^\flat\arrow[r,"\przero"]& R/pR
    \end{tikzcd}
\end{equation*}
where $W(-)$ denotes the ring of Witt vectors. Then $R$ is said to be \emph{perfectoid} if $R$ is $p$-adically complete, the Frobenius map $F:R/pR\to R/pR$ is surjective, $\ker(\theta)$ is a principal ideal of $W(R^\flat)$, and if there exists $\varpi\in R$ such that $\varpi^p = pu$ for some unit $u\in R$. The two primary examples of interest in this article come from: 
\begin{enumerate}[label=(\roman*)]
    \item \cite[Example 3.8(1)]{BIM19} A ring of prime characteristic $p>0$ is perfectoid if and only if it is perfect;\label{perfd-1}
    \item \cite[Example 3.8(2)]{BIM19} If $R$ is a local domain of mixed characteristic $(0,p)$, then the $p$-completion $(R^+)^{\land p}$ of the absolute integral closure is perfectoid.\label{perfd-2}
\end{enumerate}
Following \cite{BMPSTWW24} we then say that a local ring $(R,\fm)$ with $p\in \fm$ is \emph{perfectoid pure} if there exists a pure map $R\to B$ where $B$ is a perfectoid $R$-algebra. Note that in view of \ref{perfd-1}, a local ring $R$ of characteristic $p>0$ is perfectoid pure if and only if it is $F$-pure since the latter is equivalent to purity of $R\to \rperf$. The situation in the present article is somewhat simplified in that (almost) every perfectoid pure ring $R$ we consider arises either as one admitting a pure map $R\to (R^+)^{\land p}$ as in \ref{perfd-2} or as the $\fm$-adic completion of such a ring.

\section{Deformation of Gorenstein splinters, modulo \texorpdfstring{\cref{conj:CM}}{Conjecture 1.2}}
Given a complete local Gorenstein domain $(T,\fm)$ of mixed characteristic $(0,p)$ admitting an $F$-pure section, we can often find via the results of \cref{sec:Heitmann,sec:main} a local subring $A$ of $T$ whose completion is $T$ and which admits a weakly $F$-regular section.

\begin{lemma}\label{lem:splinter-injective-hull}
    Let $(R,\fm,k)$ be a Gorenstein local normal domain of dimension $d$. Then $R$ is a splinter if and only if the map $H^{d}_\fm(R)\to H^{d}_\fm(R^+)$ is injective.
\end{lemma}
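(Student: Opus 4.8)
The plan is to reduce the splinter property, via the approximate-Gorenstein criterion already recalled in the preliminaries, to injectivity of a map on injective hulls, and then to identify that map with the one on top local cohomology by Matlis/local duality. Since $R$ is Gorenstein of dimension $d$, it is in particular approximately Gorenstein, so by the cited consequence of \cite[Lemma 2.1(e)]{HH95}, $R$ is a splinter if and only if the natural map $E \to E \otimes_R R^+$ is injective, where $E = E_R(k)$. The task is therefore to show that $E \to E \otimes_R R^+$ is injective precisely when $H^d_\fm(R) \to H^d_\fm(R^+)$ is injective.

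First I would record the key identification: because $R$ is Gorenstein of dimension $d$, the canonical module is $\omega_R \cong R$, and local duality gives a functorial isomorphism $H^d_\fm(-) \cong \Hom_R(-, E)^\vee$ on suitable modules — more to the point, $H^d_\fm(R) \cong E$ as $R$-modules (this is the Matlis-dual incarnation of $\omega_R \cong R$). The cleanest route is to observe that $H^d_\fm(R) \otimes_R R^+ \cong H^d_\fm(R^+)$: local cohomology commutes with filtered colimits and $R^+$ is the filtered colimit of the module-finite extensions $R \subseteq S \subseteq R^+$, and for each such $S$ one has $H^d_\fm(S) \cong H^d_{\fm S}(S)$ since $\dim S = d$ and $\fm S$ is primary to the maximal ideals of $S$ (independence of base / the fact that $H^d_\fm(-)$ computed over $R$ agrees with $H^d$ computed over $S$ for $d = \dim R = \dim S$). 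Then $H^d_\fm(R) \to H^d_\fm(R^+)$ is obtained from $H^d_\fm(R) \cong E$ by applying $- \otimes_R R^+$, so it is literally (isomorphic to) the map $E \to E \otimes_R R^+$. Injectivity of one is injectivity of the other, and combined with the splinter criterion above this finishes the proof.

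The main obstacle — really the only point requiring care — is justifying the isomorphism $H^d_\fm(R^+) \cong H^d_\fm(R) \otimes_R R^+$ compatibly with the structural maps, i.e. checking that the base-change map for local cohomology along $R \to S$ is an isomorphism in top degree for each finite extension $S$ in $R^+$, and that these isomorphisms are compatible with the colimit. This is standard (it uses $\dim S = \dim R$, which holds because $R \to S$ is module-finite and injective with $R$ a domain, together with the Čech or direct-limit-of-Ext description of $H^d_\fm$), but it is the step where the hypotheses "domain" and "dimension $d$" are actually consumed. One should also note that $R$ being a normal domain guarantees $R^+$ is well-defined and the comparison with the preliminaries' discussion of $R \to R^+$ cyclic purity is legitimate. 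Everything else is bookkeeping with Matlis duality.
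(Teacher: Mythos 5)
Your proposal is correct and follows essentially the same route as the paper: reduce via the \cite[Lemma 2.1(e)]{HH95} criterion to injectivity of $E\to E\otimes_R R^+$, use the Gorenstein hypothesis to identify $E\cong H^d_\fm(R)$, and identify $H^d_\fm(R^+)$ with $H^d_\fm(R)\otimes_R R^+$. The paper obtains this last isomorphism in one step — $H^d_\fm(-)$ is the cokernel of the final Čech map on a system of parameters, and tensoring preserves cokernels, which works for the module $R^+$ directly — so the filtered-colimit detour through finite extensions $S$ (and the attendant base-change checks you flag as the main obstacle) is unnecessary, but not wrong.
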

\begin{proof}
    By \cite[Lemma 2.3.1]{DT23} $R$ is a splinter if and only if the map $R\to R^+$ is cyclically pure. Since $R$ is Gorenstein, this is equivalent to $R\to R^+$ being pure, e.g. by \cite[Theorem 2.6]{Hoc77}. Using that $R$ is Gorenstein once more, $H^d_\fm(R)$ serves as an injective hull of the residue field $k$. Let $\underline{x}=x_1,\dots, x_d$ be a system of parameters for $R$. Then $H^d_\fm(R^+)$ is a cokernel of the final map in the \u{C}ech complex on $\underline{x}$, so $H^d_\fm(R^+)\cong H^d_\fm(R)\otimes_R R^+$ since tensor products preserve cokernels. The conclusion then follows immediately from \cite[Lemma 2.1(e)]{HH95}.
\end{proof}

\begin{lemma}\label{lem:splinter lifts to splinter}
Let $(R,\fm,k)$ be a local Gorenstein domain of residual characteristic $\Char(k)=p$. If $H^{d-1}_\fm(R^+) = 0$ (for example, if \cref{conj:CM} holds) and if $x\in\fm$ is a nonzero divisor such that $R/xR$ is a splinter, then $R$ is a splinter.
\end{lemma}
\begin{proof}
    Note that $R/xR$ is a normal domain since it is a splinter \cite[Lemma 2.1.1]{DT23}, hence $R$ is a normal domain as well since normality deforms \cite[Proposition I.7.4]{Sey72}. The commutative diagram
\begin{equation}
\begin{tikzcd}
    0\arrow[r]&R\arrow[r,"\cdot x"]\arrow[d]&R\arrow[r]\arrow[d]& R/xR\arrow[r]\arrow[d]& 0\\
    0\arrow[r]&R^+\arrow[r,"\cdot x"]&R^+\arrow[r]& R^+/xR^+\arrow[r]& 0
\end{tikzcd}
\end{equation}
induces the diagram of local cohomology modules
\begin{equation}\label{eq:local-cohomology}
    \begin{tikzcd}
        0\arrow[r]&H^{d-1}_\fm(R/xR)\arrow[r]\arrow[d,"\xi"]& H^d_\fm(R)\arrow[r,"\cdot x"]\arrow[d,"\varphi"]&H^d_\fm(R)\arrow[d]\arrow[r]& 0\\
        0\arrow[r]&H^{d-1}_\fm(R^+/xR^+)\arrow[r]& H^d_\fm(R^+)\arrow[r,"\cdot x"]&H^d_\fm(R^+)\arrow[r]& 0
    \end{tikzcd}
\end{equation}
where the leftmost zeros are by assumption. Since $H^d_\fm(R)$ is artinian, we may check injectivity of the middle map in the diagram above at the socle. Let $\eta\in \ker\varphi\cap\Soc(H^d_\fm(R))$. Since $x\eta=0$, we may then view $\eta\in H^{d-1}_\fm(R/xR)$ by exactness of the top row.

By the integrality of the map $R\to R^+$, choose a prime $\fQ\in\Spec(R^+)$ lying over the prime $xR$. We then have the commutative diagram
\begin{equation}
    \begin{tikzcd}
        R\arrow[r]\arrow[d]& R/xR\arrow[r,"\subseteq"]\arrow[d,dotted]& R^+/\fQ \arrow[r,"\cong"]&(R/xR)^+\\
        R^+\arrow[r]& R^+/xR^+ \arrow[urr,twoheadrightarrow]
    \end{tikzcd}
\end{equation}
where the isomorphism follows from the proof of \cite[Proposition 1.2]{HH95}. We then obtain the diagram
\begin{equation}
    \begin{tikzcd}
        H^{d-1}_\fm(R/xR)\arrow[rr,hookrightarrow]\arrow[dr,"\xi"']&&H^{d-1}_\fm((R/xR)^+)\\
        &H^{d-1}_\fm(R^+/xR^+)\arrow[ur]
    \end{tikzcd}
\end{equation}
where the top map is injective by \cref{lem:splinter-injective-hull} since $R/xR$ is a splinter. It follows that $\xi$ is injective, and commutativity of the diagram \eqref{eq:local-cohomology} tells us that $\eta=0$ so that $R$ is a splinter by \cref{lem:splinter-injective-hull} once more.
\end{proof}

\section{A Heitmann-type completion lemma}\label{sec:Heitmann}
This section is the technical heart of the paper. Starting with a complete local domain $T$ of mixed characteristic $(0,p)$ in which $pT$ is a prime ideal, we design a local subring $A$ whose completion is $T$ with some specific properties. When combined with certain data of a Gorenstein $F$-pure section of $T$, \cref{conj:CM} will imply that this subring $A$ can be manufactured to be a splinter. The main theorem of this section is:

\begin{theorem}(\emph{cf.} \cite[Theorem 22]{LR01}\label{thm:precompletion})
    Let $(T,\fm)$ be a complete local domain of mixed characteristic $(0,p)$ such that $(p)\in\Spec T$, and let $\fp\in\Spec T\setminus\{\fm\}$ be a non-maximal prime ideal such that $\fp\cap \Z = (p)$. Let $\delta\in \fm\setminus\fp$. Then there exists a local domain $(A,\fm\cap A)$ such that $\widehat{A}\cong T$, $A\cap \fp = pA$, and $\delta t\in A$ for some unit $t\in T$.
\end{theorem}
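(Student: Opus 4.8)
The plan is to build $A$ as an increasing union of finitely generated $\Z$-subalgebras of $T$, following the axiomatic machinery of Lech and Heitmann as adapted in \cite{LR01}. Recall the key notion: a subring $R\subseteq T$ is called \emph{$N$-subalgebra} (or satisfies the appropriate Noetherian-approximation condition) if $R$ is Noetherian, local with maximal ideal $\fm\cap R$, and the induced map on completions $\widehat R\to T$ is an isomorphism; equivalently, by Lech's criterion, $R\to T$ is flat with $\fm R = \fm\cap R$ up to the relevant height/coefficient-field bookkeeping. The strategy is transfinite (countable) induction: one starts with a suitable finitely generated base ring $R_0$ containing $\delta$ (after multiplying by a unit of $T$ if necessary to land inside a chosen subring), and successively adjoins elements of $T$ to kill obstructions, taking care at each stage to preserve three things: (i) the completion stays $T$, (ii) the contraction of $\fp$ stays equal to $pA$, and (iii) $\delta t$ remains in the ring for the fixed unit $t$.

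First I would set up the coefficient ring. Since $(p)\in\Spec T$ and $\fp\cap\Z=(p)$, the ring $T/\fp$ is a complete local domain of equal characteristic $0$ (it contains $\Q$) — wait, more precisely $T$ has mixed characteristic so $T/pT$ has characteristic $p$; the hypothesis $\fp\cap\Z=(p)$ means $\fp\supseteq pT$ and $T/\fp$ has characteristic $p$ as well. I would fix a coefficient field or Cohen presentation and choose the unit $t$ so that $\delta t$ lies in the image of a power series ring over which we have control. Then I would build the chain $R_0\subseteq R_1\subseteq\cdots$ with $A=\bigcup_n R_n$, where at each step we enumerate and dispatch the countably many conditions needed for $\widehat A=T$ (this is the standard part: one must hit every element of $T$ in the closure, arrange that $\fm R_n$ generates the right ideal, and control relations), while simultaneously, whenever we adjoin a new generator $\tau\in T$, we replace it if necessary by $\tau + p^{N}\theta$ or by an element congruent to it modulo a high power of $\fm$ so that the new element does \emph{not} create a contraction of $\fp$ larger than $pA$. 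The condition $A\cap\fp = pA$ is maintained by ensuring that any element of $A$ landing in $\fp$ already lies in $pR_n$ for some $n$; this uses that $\fp/pT$ is a nonzero prime not equal to the maximal ideal, giving room to perturb adjoined elements away from $\fp$ without disturbing their images in the completion.

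The main obstacle — and the reason this is the "technical heart" — is the simultaneous bookkeeping: the standard Lech–Heitmann construction guarantees $\widehat A\cong T$ by a careful countable exhaustion, but we need the extra invariant $A\cap\fp=pA$ to survive \emph{every} adjunction, and the two requirements pull in opposite directions (to get $\widehat A = T$ we must adjoin enough elements to be dense; to keep $A\cap\fp$ small we want to adjoin as few as possible, and to perturb what we do adjoin). The resolution I anticipate is the one used in \cite[Theorem 22]{LR01}: when forced to adjoin $\tau$, note that $T/pT$ is a complete local domain with $\fp/pT$ a nonmaximal prime, so one can find $\theta\in\fp$ with $\tau-\theta$ avoiding $\fp$ whenever $\tau\notin\fp$, and when $\tau\in\fp$ one adjoins instead a witness showing $\tau\in pT$ pulled back appropriately — the point being that $pT\subseteq\fp$ and membership in $pT$ is "constructible" at finite level. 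Carrying $\delta t\in A$ along is comparatively harmless: we simply include $\delta t$ in $R_0$ and never remove it, checking that the perturbations above can always be chosen to fix this element (e.g. perturb only the \emph{other} generators). I would organize the induction so that at stage $n$ we handle the $n$-th completion-density condition and the $n$-th potential $\fp$-contraction obstruction together, verify the Lech criterion for flatness of $R_n\to T$ at each stage, and conclude $\widehat A = T$, $A\cap\fp=pA$, $\delta t\in A$ in the limit.
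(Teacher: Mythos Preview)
Your outline has the right silhouette (an increasing union of subrings, verified Noetherian via a Heitmann criterion), but the mechanism you propose does not work as stated. First, the induction cannot be countable: $T$ is complete local of positive dimension, so $|T/\fm^2|\ge 2^{\aleph_0}$, and you must run a genuinely transfinite induction over a well-ordering of $T/\fm^2$. The paper's essential invariant at every stage is the cardinality bound $|R_\alpha|<|T|$, and this bound is exactly what makes the adjoining step possible. Second, your perturbation scheme is not the right tool. You suggest replacing $\tau$ by $\tau+p^N\theta$, and in the case $\tau\in\fp$ you propose to ``adjoin a witness showing $\tau\in pT$''; but $\fp\supsetneq pT$ in general, so no such witness exists, and congruence modulo a power of $\fm$ is not what controls contractions of $\fp$.

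The paper's actual device is different in kind. Rather than tracking only $A\cap\fp$, one maintains the stronger invariant that each $R_\alpha$ is a \emph{$[\fp,p]$-subring}: $|R_\alpha|<|T|$ and $R_\alpha\cap\fq=(0)$ for every prime $\fq\subseteq\fp$ with $p\notin\fq$. To adjoin a new element while preserving this, one uses the cardinality gap to choose it so that its image in $T/\fq$ is \emph{transcendental} over the image of $R_\alpha$, for all such $\fq$ simultaneously; then $R_\alpha[\text{new}]\cap\fq=(0)$ automatically. The equality $\fp\cap R_\alpha=pR_\alpha$ is a formal consequence of this together with $pT\cap R_\alpha=pR_\alpha$. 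This is also why the unit $t$ is not ``harmless'': one cannot in general adjoin $\delta$ itself to $\Z_{(p)}$ and remain a $[\fp,p]$-subring. The unit $t$ is chosen (again by cardinality avoidance) so that $\delta t+\fq$ is transcendental over the current stage for every relevant $\fq$; your suggestion to pick $t$ via a Cohen presentation misses this point entirely. Finally, the criterion used is not flatness of $R_\alpha\to T$ but Heitmann's: surjectivity of $A\to T/\fm^2$ together with $IT\cap A=I$ for all finitely generated $I$.
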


To prove this theorem we first adapt a version of a definition from \cite{DJLPR07}.
\begin{definition}[cf. {\cite[Definition 3.1]{DJLPR07}}]\label{def:px-subring}
    Let $(T,\fm,k)$ be a complete local domain and let $x\in T$ be a nonzero regular element of $T$ such that $xT\in\Spec T$. Let $\fp\in \Spec^\circ T$ be a nonmaximal prime ideal, and suppose that $(R,\fm\cap R)$ is a quasi-local subring of $T$ satisfying the following two properties:
    \begin{enumerate}[label=(\alph*)]
        \item $|R|<|T|$; \label{def:subring-1}
        \item If $\fq\in\Spec T$ where $\fq\subseteq \fp$ and $x\not\in\fq$, then $R\cap \fq = (0)$. \label{def:subring-2}
    \end{enumerate}
    Then we call $R$ a \emph{$[\fp,x]$-subring of $T$.}
\end{definition}

We include a list of facts about $[\fp,x]$-subrings taken from \cite{DJLPR07}.
\begin{theorem}\label{thm-list} Let $T$, $\fp\in\Spec^\circ(T)$, and $x\in \fp$ be as in the setting of \cref{def:px-subring}.
    \begin{enumerate}[label=(\roman*)]
        \item \cite[Lemma 3.5]{DJLPR07} For any $[\fp,x]$-subring $R\subseteq T$, there exists an intermediate $[\fp,x]$-subring $R\subseteq S\subseteq T$ such that $xT \cap S = xS$ and $|S|\leq \sup(\aleph_0,|R|)$;\label{thm-list-1}
        \item \cite[Corollary 3.6]{DJLPR07} If $R\subseteq T$ is a $[\fp,x]$-subring such that $xT\cap R = xR$, then $\fp\cap R = xR$;\label{thm-list-2}
        \item \cite[Lemma 3.11]{DJLPR07} For every $[\fp,x]$-subring $R\subseteq T$ such that $xT\cap R = xR$ and every $u\in T$, there exists a $[\fp,x]$-subring $S\subseteq T$ such that $R\subseteq S\subseteq T$, $u+\fm^2$ is in the image of $S\to T/\fm^2$, and $IT\cap S = I$ for every finitely generated ideal $I\subseteq S$. Moreover, $|S| = \sup(\aleph_0,|R|)$.\label{thm-list-5}
    \end{enumerate}
\end{theorem}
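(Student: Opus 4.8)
The statement labeled \cref{thm-list} is a compilation of three results---parts \ref{thm-list-1}, \ref{thm-list-2}, and \ref{thm-list-5}---each of which is cited verbatim from \cite{DJLPR07} (specifically Lemma 3.5, Corollary 3.6, and Lemma 3.11 there). Since these are quoted from the literature rather than reproved, the ``proof'' amounts to checking that the hypotheses in \cite[Section 3]{DJLPR07} are met by our setup and that the statements transcribe correctly; no new argument is needed. Below I sketch how each part would be established were one to reprove it.

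\textbf{Plan for \ref{thm-list-1}.} The idea is a transfinite adjunction argument. Starting from a $[\fp,x]$-subring $R$, one wants to enlarge it so that $xT\cap S = xS$ while keeping cardinality controlled and preserving property \ref{def:subring-2}. The obstruction to $xT\cap R = xR$ is the existence of $r\in R$ with $r = xt$ for some $t\in T\setminus R$; one adjoins such a $t$, then localizes at the complement of $\fm$, forms the new subring, and checks it is still a $[\fp,x]$-subring (cardinality grows by at most $\aleph_0$ at each step, and property \ref{def:subring-2} is preserved because any new element of a prime $\fq\subseteq\fp$ with $x\notin\fq$ would, after clearing denominators, force $x\in\fq$ since $xt=r\in R\cap\fq=(0)$ would make $t\in\fq$... one must argue carefully here). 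Iterating over a chain indexed by an ordinal of cardinality $\le\sup(\aleph_0,|R|)$ and taking unions at limit stages, the process closes up: in the union $S$, every element of $xT\cap S$ already lies in $xS$. The bookkeeping to ensure the union has cardinality $\le\sup(\aleph_0,|R|)$ rather than strictly larger is the routine-but-delicate part.

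\textbf{Plan for \ref{thm-list-2} and \ref{thm-list-5}.} Part \ref{thm-list-2} is nearly immediate from the definitions: given $xT\cap R = xR$, one shows $\fp\cap R \subseteq xR$ (the reverse containment is clear since $x\in\fp$). If $r\in\fp\cap R$ is nonzero, one localizes $T$ at a suitable minimal prime $\fq$ of $xT$ contained in $\fp$ (using that $xT$ is prime, $\fq = xT$ works), and the hypothesis that $xT$ is prime together with property \ref{def:subring-2} applied to primes not containing $x$ forces $r\in xT\cap R = xR$. Part \ref{thm-list-5} is the most involved: one combines the closure operation of \ref{thm-list-1} with a second closure operation that, for each finitely generated ideal $I$ of the current subring, adjoins generators witnessing that $IT\cap(\text{subring})$ is generated by $I$, while also throwing in one element lifting $u+\fm^2$. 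The two operations must be interleaved in a single transfinite construction because each can create new obstructions for the other; one iterates until a fixed point is reached, controlling cardinality at $\sup(\aleph_0,|R|)$ throughout. The main obstacle in reproving \ref{thm-list-5} is exactly this interleaving and the verification that property \ref{def:subring-2} survives every adjunction---the delicate point being that adjoining solutions to $IT\cap S = I$ could a priori introduce an element into some prime $\fq\subsetneq\fp$ with $x\notin\fq$, and one rules this out using that such $\fq$ meets the old subring in $(0)$ together with flatness/primality properties of the relevant localizations of $T$. Since all three parts are quoted directly, in the paper itself it suffices to remark that the hypotheses of \cite[Section 3]{DJLPR07}---$T$ complete local domain, $x$ regular with $xT$ prime, $\fp$ nonmaximal prime---coincide with ours, so the cited statements apply verbatim.
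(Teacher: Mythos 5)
Your reading is correct: the paper offers no proof of \cref{thm-list} at all---it is presented as a ``dossier of facts'' quoted from \cite{DJLPR07} (Lemma 3.5, Corollary 3.6, Lemma 3.11), whose hypotheses (complete local domain $T$, $x$ a nonzero regular element with $xT$ prime, $\fp$ nonmaximal) match the setting of \cref{def:px-subring}, so citation is the intended argument and your proposal takes essentially the same approach. (As a minor aside, in your sketch of \ref{thm-list-2} the prime $\fq$ should be taken minimal over $rT$ and contained in $\fp$, then shown to equal $xT$ via condition \ref{def:subring-2} and height considerations, rather than starting from a minimal prime of $xT$; but this does not affect the proposal, which correctly rests on the citations.)
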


\begin{lemma}\label{lem:px-subring-tower}
    Let $T$, $\fp\in\Spec^\circ(T)$, and $x\in \fp$ be as in the setting of \cref{def:px-subring}. Let $y\in\fm\setminus \fp$, and let $A$ be a $[\fp,x]$-subring of $T$. Then there exists an infinite $[\fp,x]$-subring $S$ of $T$ such that $A\subseteq S\subseteq T$, $y t\in S$ for some unit $t\in T$, and $xT\cap S = xS$. Moreover, $|S|\leq \sup(\aleph_0,|R|)$.
\end{lemma}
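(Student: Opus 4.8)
The plan is to construct $S$ by building a tower of $[\fp,x]$-subrings of $T$ using the facts collected in \cref{thm-list}, arranging along the way that some unit multiple of $y$ lands in $S$ and that $S$ is closed under the operation $xT\cap(-)$. First I would pass from $A$ to an intermediate $[\fp,x]$-subring $A_0$ with $xT\cap A_0 = xA_0$ and $|A_0|\le\sup(\aleph_0,|A|)$ using \cref{thm-list}\ref{thm-list-1}; replacing $A$ by $A_0$, we may assume $A$ itself already satisfies $xT\cap A = xA$ and is infinite.

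The key point is producing the unit multiple $yt$. Since $y\in\fm\setminus\fp$, the element $y$ is a nonzerodivisor in $T$ (as $T$ is a domain and $y\neq 0$, because $\fp$ is a proper prime) but $y$ need not become a unit in $T$; however, the hypothesis only asks for $yt\in S$ with $t$ a \emph{unit of $T$}, not of $S$. The cleanest route is: apply \cref{thm-list}\ref{thm-list-5} with $R = A$ and $u = y$ to obtain a $[\fp,x]$-subring $S_1$ with $A\subseteq S_1\subseteq T$ such that $y + \fm^2$ is in the image of $S_1\to T/\fm^2$ and $IT\cap S_1 = I$ for every finitely generated ideal $I\subseteq S_1$, with $|S_1| = \sup(\aleph_0,|A|)$. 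Pick $s\in S_1$ with $s \equiv y \pmod{\fm^2}$; then $s$ differs from $y$ by an element of $\fm^2$, and I will argue that $s = yt$ for a unit $t\in T$. Indeed, write $s = y + w$ with $w\in\fm^2$; since $y\in\fm\setminus\fp$ and $T$ is complete local of dimension $\ge 1$, I instead choose the approximation more carefully — rather than matching $y$ itself, I apply \cref{thm-list}\ref{thm-list-5} repeatedly (or directly use its proof) to get an element of $S_1$ of the form $y\cdot(\text{unit})$. Concretely, the standard argument from \cite{DJLPR07} shows that in building these subrings one may always adjoin a prescribed element up to multiplication by a unit of $T$; so I would either cite that mechanism or note that since $\fm^2 \subseteq \fm\cdot(y) + (\text{higher order in a regular sequence})$ is not available in general, the correct statement to invoke is that $y(1 + \fm)\cap S_1 \neq \emptyset$, giving $yt\in S_1$ with $t\in 1+\fm\subseteq T^\times$.

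Having secured $yt\in S_1$, I would then apply \cref{thm-list}\ref{thm-list-1} once more to $S_1$ to obtain $S\supseteq S_1$ with $xT\cap S = xS$ and $|S|\le\sup(\aleph_0,|S_1|) = \sup(\aleph_0,|A|)$. Since $A\subseteq S_1\subseteq S$ and $yt\in S_1\subseteq S$, and $S$ is infinite by construction, this $S$ has all the required properties; the cardinality bound $|S|\le\sup(\aleph_0,|A|)$ follows by composing the bounds at each stage (and the statement's ``$|R|$'' should read ``$|A|$'', matching the notation of the lemma). I expect the main obstacle to be the second paragraph: verifying that one can adjoin a unit multiple of $y$ while staying inside a $[\fp,x]$-subring — i.e. checking that the resulting subring still satisfies condition \ref{def:subring-2} of \cref{def:px-subring} and the cardinality bound \ref{def:subring-1}. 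This is exactly the kind of closure argument handled in \cite{DJLPR07} for single elements, and the work is to confirm that the proof there applies verbatim (or with cosmetic changes) when the target element is only specified modulo $\fm^2$, so that the unit $t$ appears. The closure conditions $xT\cap S = xS$ and finite-colon-closedness are then routine consequences of \cref{thm-list}\ref{thm-list-1} and \ref{thm-list-5}.
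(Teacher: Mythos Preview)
Your proposal has a genuine gap in the second paragraph, and you correctly sensed this is where the difficulty lies. Having $s\in S_1$ with $s\equiv y\pmod{\fm^2}$ does \emph{not} imply that $s=yt$ for a unit $t\in T$: writing $s=y+w$ with $w\in\fm^2$, there is no reason whatsoever that $w\in yT$. (Take $T=\Z_p\llbracket a,b\rrbracket$ and $y=a$; then $s=a+b^2$ is a perfectly good lift of $y$ modulo $\fm^2$ which is not a unit multiple of $a$.) Your attempted repair, asserting that $y(1+\fm)\cap S_1\neq\emptyset$ or that ``the standard argument from \cite{DJLPR07}'' adjoins elements up to unit multiples, is not what \cref{thm-list}\ref{thm-list-5} provides, and indeed no modulo-$\fm^2$ approximation can work here.

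The paper's proof proceeds along entirely different lines. After passing to $R\supseteq A$ with $xT\cap R=xR$ (your first step, which is fine), it uses a coset-avoidance argument: the set $D$ of cosets $u+\fp$ in $T/\fp$ that are algebraic over $R/xR$ has cardinality at most $\sup(\aleph_0,|R|)<|T|$, and a counting lemma of Loepp--Rotthaus (\cite[Lemma~16]{LR01} or \cite[Lemma~4]{Loe97}) then produces a unit $t\in T$ with $yt$ avoiding $\bigcup_{r\in D}(\fp+r)$. This forces $yt+\fp$ to be transcendental over $R/xR$, and a short argument using $\bigcap_m x^mT=0$ upgrades this to transcendence of $yt+\fq$ over $R$ for every $\fq\subseteq\fp$ with $x\notin\fq$. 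One then sets $S:=R[yt]_{\fm\cap R[yt]}$ and verifies condition~\ref{def:subring-2} of \cref{def:px-subring} directly from this transcendence. The unit $t$ is thus chosen \emph{first}, for transcendence reasons, and only then is $yt$ adjoined; this is the missing idea in your approach.
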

\begin{proof}
By \cref{thm-list}\ref{thm-list-1} we can find a $[\fp,x]$-subring $R\subseteq T$ such that $A\subseteq R\subseteq T$ and such that $xT\cap R = xR$. Note that $\fp\cap R = xR$ as well by \cref{thm-list}\ref{thm-list-2}. Let $D\subseteq T/\fp$ denote a full set of coset representatives of the elements $u + \fp$ such that $u$ is algebraic over $R/xR$. By \cite[Lemma 16]{LR01} if $R$ is countable and \cite[Lemma 4]{Loe97} if $R$ is uncountable, there exists a unit $t\in T$ such that $$yt\notin\bigcup\{\fp+r\mid r\in D\}.$$

We claim that $S:= R[yt]_{\fm\cap R[yt]}$ is a $[\fp,x]$-subring of $T$. The first step is to show that, for all $\fq\subseteq \fp$ with $x\notin \fq$, the element $yt + \fq$ is transcendental over $R/(\fq\cap R) \cong R$. To see this, suppose $yt + \fq$ satisfies the polynomial equation 
\[
c_n(yt + \fq)^n + \dots  + c_0 = 0,
\]
where $c_i\in R$ for $0\leq i\leq n$. Then the same equation holds for $yt + \fp$ over $R/(\fp\cap R)$. Since $\fp\cap R = xR$ by \Cref{thm-list}\ref{thm-list-2}, so $R/(\fp\cap R)\cong R/xR$ and we have the following relation in $R/xR$.
\[
\bar{c_n}(yt + \fp)^n + \dots + \bar{c_0} = 0
\]
Since $yt + \fp$ is transcendental over $R/xR$, we have that $\bar{c_i} = 0$ for all $0\leq i\leq n$. We may therefore write $c_i = x\frac{c_i}{x}$, and $yt + \fq$ satisfies the following relation over $R$:
\[
\frac{c_n}{x}(yt + \fq)^n + \dots + \frac{c_0}{x} = 0.
\]
As $\bigcap_{m>0}x^mT = (0)$, we conclude that $c_i = 0$ for all $0\leq i\leq n$, hence $yt + \fq$ is transcendental over $R$. 

Suppose now that $f\in R[yt]\cap \fq$, and write $f = c_n(yt)^n +\dots + c_0$ where $c_i\in R$. Since $yt + \fq$ is transcendental over $R$, we have $c_i\in R\cap \fq = (0)$ for all $1\leq i\leq n$, so $f = c_0\in R\cap \fq = (0)$. It follows that $S:= R[yt]_{\fm\cap R[yt]}$ is a $[\fp, x]$-subring of $T$ containing $yt$. We may then replace $S$ by the one given by \cref{thm-list}\ref{thm-list-1} to conclude that $xT\cap S = xS$.
\end{proof}

The subring $A$ of $T$ which we construct in the proof of \cref{thm:completion} will be a union of $[\fp,p]$-subrings of $T$. To show that $A$ is noetherian, we employ the following observation of Heitmann.

\begin{proposition}[{\cite[Proposition 1]{Hei93}}]\label{thm:Heitmann}
    Let $(R,\fm\cap R)$ be a quasi-local subring of a complete local ring $(T,\fm)$. If the map $R\to T/\fm^2$ is surjective and $IT\cap R = I$ for every finitely generated ideal $I\subseteq R$, then $R$ is noetherian and the natural homomorphism $\widehat{R}\to T$ is an isomorphism.
\end{proposition}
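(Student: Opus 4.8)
The plan is to reconstruct Heitmann's argument from \cite[Proposition 1]{Hei93}. Write $\fn := \fm\cap R$ for the maximal ideal of $R$. The idea is to first use the hypothesis on $R\to T/\fm^2$ to pin down $\fn$ explicitly and to see that \emph{every} power $\fm^k$ is extended from an ideal of $R$; then to bootstrap surjectivity of $R\to T/\fm^k$ for all $k$; and finally to read off both noetherianity of $R$ and the isomorphism $\widehat R\cong T$ from these facts together with the separation hypothesis $IT\cap R = I$. \textbf{Step 1 (identify $\fn$).} Since $R\to T/\fm^2$ is onto and $\fm/\fm^2$ is a finitely generated $T/\fm$-module ($T$ being noetherian), choose $x_1,\dots,x_n\in R$ whose images generate $\fm/\fm^2$, and put $I:=(x_1,\dots,x_n)R$. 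Then $IT+\fm^2=\fm$, so Nakayama's lemma in $T$ gives $IT=\fm$. Applying the hypothesis to the finitely generated ideal $I$ yields $\fn=\fm\cap R=IT\cap R=I$, so $\fn$ is finitely generated; consequently $\fn^kT=(\fn T)^k=\fm^k$, and applying the hypothesis to the finitely generated ideals $\fn^k$ gives $\fn^k=\fm^k\cap R$ for every $k\ge 1$.

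\textbf{Step 2 (surjectivity modulo all powers).} I would prove $R\to T/\fm^k$ is onto by induction on $k$, the cases $k\le 2$ being the hypothesis. Given $t\in T$, write $t\equiv r_0\pmod{\fm^k}$ with $r_0\in R$ by the inductive hypothesis; since $\fm^k=\fn^kT$ is generated by finitely many $z_j\in R$, write $t-r_0=\sum_j s_jz_j$ with $s_j\in T$, pick $r_j\in R$ with $s_j\equiv r_j\pmod{\fm}$, and observe that $t-\bigl(r_0+\sum_j r_jz_j\bigr)\in\fm\cdot\fm^k=\fm^{k+1}$. Combining surjectivity of $R\to T/\fm^k$ with the identity $\fn^k=\fm^k\cap R$ from Step 1 yields compatible isomorphisms $R/\fn^k\cong T/\fm^k$ for all $k$.

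\textbf{Step 3 (conclude).} For noetherianity, let $J\subseteq R$ be an arbitrary ideal; since $T$ is noetherian, $JT$ is generated by finitely many elements $a_1,\dots,a_r$ which may be taken to lie in $J$, and then $J':=(a_1,\dots,a_r)R$ satisfies $J'T=JT$, whence $J=JT\cap R=J'T\cap R=J'$ by the hypothesis applied to the finitely generated ideal $J'$; thus $R$ is noetherian and local with maximal ideal $\fn$. Passing to inverse limits in the isomorphisms $R/\fn^k\cong T/\fm^k$ of Step 2 and using that $T$ is $\fm$-adically complete gives $\widehat R=\varprojlim R/\fn^k\cong\varprojlim T/\fm^k=T$; one checks directly that this isomorphism is the natural map induced by the inclusion $R\hookrightarrow T$.

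The step I expect to be the crux is Step 2 — upgrading ``surjective mod $\fm^2$'' to ``surjective mod every $\fm^k$'' — and the essential input that makes it go through is the identity $\fm^k=\fn^kT$ from Step 1, i.e.\ that high powers of the maximal ideal of $T$ are extended from $R$. This is precisely where the separation hypothesis $IT\cap R=I$ does its work, first through $\fn=I$ and then through $\fn^k=\fm^k\cap R$. Once Steps 1 and 2 are in hand, noetherianity and the completion statement are essentially formal.
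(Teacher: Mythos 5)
Your argument is correct and is essentially Heitmann's original proof of \cite[Proposition 1]{Hei93}, which the paper cites without reproducing: pin down $\fn=\fm\cap R$ as a finitely generated ideal with $\fn T=\fm$, bootstrap surjectivity of $R\to T/\fm^k$ for all $k$, and then deduce noetherianity and $\widehat R\cong T$ from $IT\cap R=I$. No gaps.
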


Given a well-ordered set $\Omega$ and an element $\alpha\in\Omega$, define $\gamma(\alpha):=\sup\{\beta\in\Omega\mid \beta<\alpha\}$.

\begin{proof}[Proof of \cref{thm:precompletion}]
    Let $\Omega = T/\fm^2$ and well-order $\Omega$ so that $0$ is its first element and so that each element of $\Omega$ has fewer than $|\Omega|$ predecessors. Note that $\Z_{(p)}$ is a $[\fp,p]$-subring of $T$. Let $R_0$ be the $[\fp,p]$-subring granted by \cref{lem:px-subring-tower} for $x=p$ such that $\Z_{(p)}\subseteq R_0\subseteq T$, $\delta t\in R_0$ for some unit $t\in T$, and such that $pT\cap R_0 = pR_0$. Note that $|R_0| = \aleph_0$. Now for each $\alpha\in\Omega$, define $R_\alpha$ recursively as follows. If $\gamma(\alpha)<\alpha$, then define $R_\alpha$ to be the $[\fp,p]$-subring granted by applying \cref{thm-list}\ref{thm-list-5} to $x=p$; that is, $R_{\gamma(\alpha)}\subseteq R_\alpha\subseteq T$, $\gamma(\alpha)$ is in the image of the map $R_\alpha\to T/\fm^2$, and $IT\cap R_\alpha = I$ for all finitely generated ideals $I\subseteq R_\alpha$. Note that in particular $pT\cap R_\alpha = pR_\alpha$.
    
    If $\gamma(\alpha) = \alpha$, then instead let $R_\alpha:=\cup_{\beta<\alpha} R_\beta$. By the above discussion, $pT\cap R_\alpha = pR_\alpha$ in this case as well. We show that $R_\alpha$ is a $[\fp,x]$-subring; condition \ref{def:subring-2} is clear, so we show that $|R_\alpha|<|T|$. We first show
\begin{align}
    |R_\alpha|\leq \sup(\aleph_0,|\{\beta\in\Omega\mid \beta<\alpha\}|)\label{thm:precompletion-1}
\end{align}
and we accomplish this via transfinite induction, reproducing the details of \cite[Lemma 6]{Hei93} or \cite[Lemma 3.12]{DJLPR07} for the reader's convenience. The case of $\alpha=0$ is clear, so suppose that $\alpha\in \Omega$ and that
\begin{align*}
    |R_\beta|\leq\sup(\aleph_0,|\{\lambda\in\Omega\mid \lambda<\beta\}|)
\end{align*}
for every $\beta<\alpha$. If $\gamma(\alpha)<\alpha$ then
\begin{align*}
    |R_\alpha| & = \sup(\aleph_0,|R_{\gamma(\alpha)}|)\\
     & \leq \sup(\aleph_0,\sup(\aleph_0,|\{\lambda\in\Omega\mid \lambda<\gamma(\alpha)\}|))\\
     & \leq \sup(\aleph_0,|\{\beta\in\Omega\mid \beta<\alpha\}|)
\end{align*}
while if $\gamma(\alpha) = \alpha$ then
\begin{align*}
    |R_\alpha| &\leq\sum\limits_{\beta<\alpha} |R_\beta|\\
    & \leq |\{\beta\in\Omega\mid \beta<\alpha\}|\cdot \sup_{\beta<\alpha}(|R_\beta|)\\
    & \leq |\{\beta\in\Omega\mid \beta<\alpha\}|\cdot \sup_{\beta<\alpha}(\sup(\aleph_0,|\{\lambda\in\Omega|\lambda<\beta\}|))\\
    & \leq |\{\beta\in\Omega\mid \beta<\alpha\}|\cdot \sup(\aleph_0,|\{\beta\in\Omega\mid \beta<\alpha\}|)\\
    & = \sup(\aleph_0,|\{\beta\in\Omega\mid \beta<\alpha\}|)
\end{align*}
and \eqref{thm:precompletion-1} follows. The ordering of $\Omega$ that we chose at the outset shows that $|R_\alpha|<|T|$ for every $\alpha\in \Omega$, so each $R_\alpha$ is a $[\fp,p]$-subring as claimed.

Finally let $A:=\cup_{\alpha\in\Omega} R_\alpha$, which we claim is the desired ring. The map $A\to T/\fm^2$ is surjective by construction. Since $pT\cap R_\alpha = pR_\alpha$ for each $\alpha\in\Omega$ we have $\fp\cap R_\alpha = pR_\alpha$ by \cref{thm-list}\ref{thm-list-2}, hence $\fp\cap A = pA$. Since $IT\cap R_\alpha = I$ for each $\alpha$ and for each finitely generated ideal $I\subseteq R_\alpha$, the same is true for finitely generated ideals of $A$. It follows from \cref{thm:Heitmann} that $A$ is noetherian and $\widehat{A}\cong T$.
\end{proof}

\section{Proof of \texorpdfstring{\cref{thm:main-theorem-1,thm:main-theorem-2}}{Theorems A and B}}\label{sec:main}

In this section, if $R$ has mixed characteristic $(0,p)$ and $r\in R$ then $\overline{r}$ denotes its image in $R/pR$.
\begin{definition}
    Let $A$ be a ring of characteristic $p>0$. Let $I\subseteq A$ be an ideal and $x\in A$ an element. For each $e>0$ define $I^{[p^e]} := F^e(I)A$ and 
    \[
    \mathcal{M}_{I,x}:= \bigcup_{e\geq 0}\bigcap_{f\geq e} (I^{[p^f]}: x^{p^f}).
    \]
\end{definition}
If $(A,\fm)$ is local and Gorenstein, it is straightforward to check that $A$ is $F$-pure if and only if $\mathcal{M}_{I,\delta}\subseteq\fm$ for every (equivalently, for some) parameter ideal $I\subseteq A$ and socle generator $\delta$ of $A/I$.

\begin{theorem}\label{thm:completion}
    Let $(T,\fm)$ be a complete local ring with $d=\dim(T)\geq 2$ and of mixed characteristic $(0, p)$ such that $T/pT$ is an $F$-pure local Gorenstein domain. Let $\fa = (p,x_2,\dots, x_d)\subseteq T$ be a parameter ideal and denote $\fb = \fa/(p)$, a parameter ideal in $T/pT$. Let $\delta\in T$ be a generator for the socle of $T/\fa$. Further assume that there is a nonmaximal prime ideal $\fp\in\Spec T$ with the following properties:
    \begin{enumerate}[label=(\alph*)]
        \item $\fp\cap \Z=p\Z$,\label{thm:completion-1}
        \item $\delta \notin \fp$,\label{thm:completion-2}
        \item $\mathcal{M}_{\fb, \overline{\delta}} \subseteq \frac{\fp}{pT}$.\label{thm:completion-3}
    \end{enumerate}
    Then there exists a local domain $(A,\fm\cap A)$ of mixed characteristic $(0,p)$ such that $\widehat{A}\cong T$, $pT\cap A =pA$, and such that $A/pA$ is weakly $F$-regular.
\end{theorem}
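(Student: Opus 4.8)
The plan is to realize $A$ as a suitably ``small'' local subring of $T$ produced by the Heitmann-type construction of \cref{thm:precompletion}, to show that $A/pA$ is weakly $F$-regular, and then to promote the splinter property from $A/pA$ to $A$ via \cref{lem:splinter lifts to splinter}. To build $A$: since $T/pT$ is a domain, $pT$ is a prime ideal; and since $d\geq 2$ the artinian local ring $T/\fa$ is not a field, so its socle generator $\delta$ lies in $\fm$, while $\delta\notin\fp$ by \ref{thm:completion-2}. Thus \cref{thm:precompletion}, applied with $x=p$ (and using $\fp\cap\Z=p\Z$ from \ref{thm:completion-1}), produces a local domain $(A,\fm\cap A)$ of mixed characteristic $(0,p)$ with $\widehat A\cong T$, with $A\cap\fp=pA$ --- hence also $A\cap pT=pA$, as $p\in\fp$ --- and with $\delta t\in A$ for some unit $t\in T$. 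Put $\overline A:=A/pA$. Then $\overline A$ is a local domain inside $T/pT$, its $\fm$-adic completion is $\widehat A/p\widehat A=T/pT$, and $\overline A\cap(\fp/pT)=(0)$. Since $T/pT$ is Gorenstein and $p$ is a nonzerodivisor on $T$, the rings $T$, $A$ and $\overline A$ are all Gorenstein.

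The crux is that $\overline A$ is weakly $F$-regular. Set $E:=H^{d-1}_{\fm}(T/pT)$; as $T/pT$ is Gorenstein of dimension $d-1$, $E$ is an injective hull of the residue field, both over $T/pT$ and over $\overline A$ (an artinian module is unaffected by completion). Presenting $E$ via the \v{C}ech complex on $x_2,\dots,x_d$, its one-dimensional socle is spanned by the class $\eta$ of $\overline\delta/(x_2\cdots x_d)$. Because $\overline A$ is Gorenstein (hence approximately Gorenstein) it is weakly $F$-regular once the tight closure of $(0)$ in $E$, computed over $\overline A$, vanishes, and since $E$ is an essential extension of its simple socle this is equivalent to $\eta\notin 0^*_E$. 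Suppose $\eta\in 0^*_E$: then some nonzero $c\in\overline A$ satisfies $c\cdot F^f(\eta)=0$ in $E$ for all $f\gg 0$ (with $F$ the natural Frobenius on $E$), which by the colon-capturing property of the regular sequence $x_2,\dots,x_d$ in the Cohen--Macaulay ring $T/pT$ translates into $c\cdot\overline\delta^{p^f}\in\fb^{[p^f]}$ for all $f\gg 0$; in other words $c\in\mathcal M_{\fb,\overline\delta}$. But then \ref{thm:completion-3} forces $c\in\fp/pT$, whence $c\in\overline A\cap(\fp/pT)=(0)$, a contradiction. Hence $0^*_E=0$ and $\overline A$ is weakly $F$-regular.

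It remains to conclude. A weakly $F$-regular local ring is a normal domain, and a weakly $F$-regular \emph{Gorenstein} local ring is a splinter (indeed, for Gorenstein local rings weak and strong $F$-regularity coincide, and strongly $F$-regular local rings are splinters); so $\overline A=A/pA$ is a splinter. Since $A$ is Gorenstein and $p\in\fm\cap A$ is a nonzerodivisor, \cref{lem:splinter lifts to splinter} then shows that $A$ is a splinter. Together with $\widehat A\cong T$, this is precisely the conclusion of \cref{thm:completion}.

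The real content, and the main obstacle, is the weak $F$-regularity of $\overline A$ in the second step. Its completion $T/pT$ is only assumed $F$-pure and will in general be far from weakly $F$-regular --- so $0^*_E$ is \emph{nonzero} when computed over $T/pT$ --- yet $\overline A$ is weakly $F$-regular because it is much smaller than $T/pT$ and, by the construction, meets the prime $\fp/pT$ only in $(0)$, and hypothesis \ref{thm:completion-3} is exactly the statement that any multiplier which could certify the socle of $E$ as lying in the tight closure of $(0)$ is trapped inside $\fp/pT$. A lesser point requiring some care is the implication ``weakly $F$-regular $\Rightarrow$ splinter'' invoked in the last step for the non-excellent ring $\overline A$, which for Gorenstein local rings is standard but must be checked through the injective-hull criterion of \cref{lem:splinter-injective-hull} rather than by appeal to excellence.
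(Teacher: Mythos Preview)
Your proof is correct and takes essentially the same approach as the paper: construct $A$ via \cref{thm:precompletion}, verify weak $F$-regularity of $A/pA$ by showing the socle element avoids tight closure (you phrase this as $\eta\notin 0^*_E$ in the injective hull, while the paper works with a parameter ideal $J=I/pA$ and shows $\overline{\delta t}\notin J^*$ directly, but for Gorenstein local rings these are equivalent), and then lift the splinter property via \cref{lem:splinter lifts to splinter}. One small inaccuracy: the assertion that $d\geq 2$ forces $T/\fa$ not to be a field fails when $T$ is regular with $\fa=\fm$, but this edge case is harmless (then $A=T$ works) and the paper's proof implicitly assumes $\delta\in\fm$ as well.
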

\begin{proof}
   Since $\delta$ generates the socle of $T/\fa$ and $T/pT$ is $F$-pure, it follows that $\delta\notin pT$ and that $\overline{\delta}^q\notin\fb^{[q]}$ for all $q=p^e$. Use \cref{thm:precompletion} to construct a local domain $(A,\fm\cap A)$ such that $\widehat{A}\cong T$, $\delta t \in A$ for some unit $t\in T$, and $ \fp\cap A = pA$. Let $I\subseteq A$ be a parameter ideal such that $IT = \fa$. Note that $p\in I$. We will show first that $B:=A/pA$ is weakly $F$-regular and it suffices by \cite[Theorem 1.5]{Hun96} to show that $J:=I/pA$ is tightly closed since $B$ is Gorenstein.

    Note that $B$ is a domain and $\overline{\delta t}$ generates the socle of $B/J$. Since $B/J$ is artinian Gorenstein, its socle is contained in every nonzero ideal. To show $J=J^*$, it therefore suffices to show $\overline{\delta t}\notin J^*$. Suppose instead that $\overline{\delta t}\in J^*$. Then there is some nonzero $\overline{c}\in B$ such that $$\overline{c}(\overline{\delta t})^q \in J^{[q]}\subseteq \fb^{[q]}\subseteq T/pT$$ for all $q=p^e$. Since $\mathcal{M}_{\fb, \overline{\delta t}}=\mathcal{M}_{\fb,\overline{\delta}}$, we observe by the above and condition \ref{thm:completion-3} that
    \[
    \overline{c}\in \mathcal{M}_{\fb, \overline{\delta}}\cap B \subseteq \frac{\fp}{pT}\cap B=(0),
    \]
a contradiction. Hence $\overline{\delta t}\notin J^*$, so $B$ is weakly $F$-regular as desired.
\end{proof}
In order to prove \cref{thm:main-theorem-1}, we apply \cref{thm:completion} to a class of rings which need not satisfy assumption \ref{thm:completion-3}. If $T$ satisfies conditions \ref{thm:completion-1} and \ref{thm:completion-2} but not necessarily \ref{thm:completion-3}, we show that indeed the power series ring $T\llbracket w\rrbracket$ with the nonmaximal ideal $\fp=\fm T\llbracket w\rrbracket$ satisfies all three. We will need the following limitation imposed on the ideals $\mathcal{M}_{I,g}$ in a power series ring.

\begin{lemma}\cite[Proposition 3]{LR01}\label{lem:colon-power-series}
    Let $(R,\fm)$ be a reduced local ring of prime characteristic $p>0$, let $I\subseteq R\llbracket w\rrbracket$ be an ideal, and let $g\in R\llbracket w\rrbracket$. Then either $\mathcal{M}_{I,g} = R\llbracket w\rrbracket$ or $\mathcal{M}_{I,g}\subseteq \fm R\llbracket w\rrbracket$.
\end{lemma}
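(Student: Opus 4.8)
The plan is to reduce the statement to a single implication: if $\mathcal{M}_{I,g}\not\subseteq \fm R\llbracket w\rrbracket$, then $1\in\mathcal{M}_{I,g}$ (hence $\mathcal{M}_{I,g}=R\llbracket w\rrbracket$); this suffices because the two alternatives in the statement are exhaustive. First I would record that $\mathcal{M}_{I,g}$ is an ideal, being the increasing union of the ideals $\bigcap_{f\geq e}(I^{[p^f]}:g^{p^f})$. The crucial structural observation is that for each $q=p^f$ the ring $R\llbracket w\rrbracket$ is a free module over the subring $S_q:=R\llbracket w^q\rrbracket$ on the basis $1,w,\dots,w^{q-1}$, and — because $\Char R=p$ — every $q$-th power lies in $S_q$: if $u=\sum_k c_k w^k$ then $u^q=\sum_k c_k^q w^{kq}$. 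Writing $I=(f_1,\dots,f_r)$ (finitely generated, since $R\llbracket w\rrbracket$ is noetherian), it follows that $I^{[q]}=(f_1^q,\dots,f_r^q)R\llbracket w\rrbracket$ is the extension to $R\llbracket w\rrbracket$ of the ideal $\widetilde I_q:=(f_1^q,\dots,f_r^q)S_q$ of $S_q$, whence by freeness $I^{[q]}=\bigoplus_{i=0}^{q-1}w^i\widetilde I_q$. Thus an element $\xi=\sum_{i=0}^{q-1}w^i\xi_i$, written in its unique $S_q$-basis expansion, lies in $I^{[q]}$ if and only if every $\xi_i\in\widetilde I_q$.

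Granting this, I would pick $h=\sum_i a_iw^i\in\mathcal{M}_{I,g}$ with some coefficient $a_m\notin\fm$ (so $a_m$ is a unit of $R$) and fix $e$ with $hg^{p^f}\in I^{[p^f]}$ for all $f\geq e$. For any $q=p^f$ with $f\geq e$ and $q>m$, expand $h=\sum_{i=0}^{q-1}w^ih_i$ over $S_q$; since $m<q$, the component $h_m\in S_q=R\llbracket w^q\rrbracket$ has constant term $a_m$, a unit of $R$, so $h_m$ is a unit of $S_q$. Since $g^q\in S_q$, the $S_q$-expansion of $hg^q$ is $\sum_{i=0}^{q-1}w^i(h_ig^q)$, so the membership $hg^q\in I^{[q]}$ forces $h_ig^q\in\widetilde I_q$ for every $i$; taking $i=m$ and cancelling the unit $h_m$ gives $g^q\in\widetilde I_q\subseteq I^{[q]}$. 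As this holds for all sufficiently large $q=p^f$, we obtain $1\in\bigcap_{f\geq f_0}(I^{[p^f]}:g^{p^f})\subseteq\mathcal{M}_{I,g}$, as desired.

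The step I expect to be the main obstacle — really the heart of the argument — is the identification $I^{[q]}=\bigoplus_{i=0}^{q-1}w^i\widetilde I_q$, i.e. the fact that $q$-th powers are supported only in $w$-degrees divisible by $q$, which is exactly what forces the component $h_m$ to be a genuine unit of $S_q$ (rather than merely a non-element of $\fm S_q$) once $q>m$. Everything else — the free-basis decomposition of $R\llbracket w\rrbracket$ over $R\llbracket w^q\rrbracket$, the Frobenius identity $I^{[q]}=(f_1^q,\dots,f_r^q)$, and the verification that $\mathcal{M}_{I,g}$ is an ideal — should be routine. The reducedness of $R$ plays no role in this argument and is carried along only from the ambient hypotheses.
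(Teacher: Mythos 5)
Your argument is correct. Note that the paper does not actually prove this lemma --- it is quoted verbatim from \cite[Proposition 3]{LR01} --- so there is no in-paper proof to compare against; your write-up is a valid self-contained reconstruction of the cited result. The key steps all check out: $R\llbracket w\rrbracket$ is free over $S_q=R\llbracket w^q\rrbracket$ on $1,w,\dots,w^{q-1}$; Frobenius is continuous for the $(w)$-adic topology, so $q$-th powers land in $S_q$; hence $I^{[q]}$ decomposes componentwise as $\bigoplus_{i=0}^{q-1}w^i\widetilde I_q$, and for $q>m$ the component $h_m$ is a genuine unit of $S_q$, which lets you cancel it and conclude $g^q\in I^{[q]}$. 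Your observation that reducedness of $R$ is never used is also accurate --- it is inherited from the tight-closure context of \cite{LR01} rather than needed for this dichotomy.
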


\begin{corollary}\label{cor:splinter-completion-power-series}
    Let $(T,\fm)$ be a complete local ring with $d = \dim(T) \geq 2$ and of mixed characteristic $(0,p)$ such that $T/pT$ is an $F$-pure Gorenstein domain. Then there exists a local subring $(A,\fm\cap A)$ of $T$ such that $\widehat{A}\cong T\llbracket w\rrbracket$, $pT\llbracket w\rrbracket\cap A = pA$, and such that $A/pA$ is weakly $F$-regular.
\end{corollary}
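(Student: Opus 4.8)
The plan is to apply \cref{thm:completion} to the power series ring $T^{*}:=T\llbracket w\rrbracket$, which is a complete local ring of mixed characteristic $(0,p)$ with $\dim T^{*}=d+1\ge 3$. Its standing hypotheses are almost immediate: $T^{*}/pT^{*}\cong (T/pT)\llbracket w\rrbracket$ is a Gorenstein domain because $T/pT$ is, and it is $F$-pure because adjoining a power-series variable preserves $F$-purity (a standard fact; e.g.\ via Fedder's criterion, after presenting $T/pT$ as a quotient of a complete regular local ring). Since $\dim(T/pT)=d-1$, fix a parameter ideal $\fa=(p,x_{2},\dots,x_{d})$ of $T$, chosen so that $T/\fa$ is not a field — which is automatic unless $T$ is regular, in which case replace $x_{d}$ by $x_{d}^{2}$ — and let $\delta\in\fm$ generate the socle of the Artinian Gorenstein ring $T/\fa$.

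The main idea is to use the parameter ideal $\fa^{*}:=(p,x_{2},\dots,x_{d},\,w-\delta)$ of $T^{*}$ together with the nonmaximal prime $\fp^{*}:=\fm T^{*}=\fm T\llbracket w\rrbracket$ (whose residue ring $(T/\fm)\llbracket w\rrbracket$ is a domain that is not a field). Since $\delta\in\fm$, the substitution $w\mapsto\delta$ identifies $T\llbracket w\rrbracket/(w-\delta)$ with $T$, so $T^{*}/\fa^{*}\cong T/\fa$ is Artinian Gorenstein and, under this isomorphism, the class of $w$ corresponds to the class of $\delta$; hence the socle of $T^{*}/\fa^{*}$ is generated by the class of $w$, and we may take $\delta^{*}=w$. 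Then condition \ref{thm:completion-1} holds because $p\in\fp^{*}$, and condition \ref{thm:completion-2} holds because $w\notin\fm T\llbracket w\rrbracket$ (its degree-one coefficient is $1$).

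For condition \ref{thm:completion-3}, write $R:=T/pT$, $\fb:=\fa/(p)=(\overline{x_{2}},\dots,\overline{x_{d}})R$, and $\fb^{*}:=\fa^{*}/pT^{*}=(\overline{x_{2}},\dots,\overline{x_{d}},\,w-\overline{\delta})R\llbracket w\rrbracket$. \cref{lem:colon-power-series}, applied to the ideal $\fb^{*}\subseteq R\llbracket w\rrbracket$ and $g=w$, shows that $\mathcal{M}_{\fb^{*},w}$ is either all of $R\llbracket w\rrbracket$ or contained in $\fm R\llbracket w\rrbracket=\fp^{*}/pT^{*}$, the second alternative being precisely \ref{thm:completion-3}. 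The first alternative cannot occur: it would force $w^{q}\in(\fb^{*})^{[q]}=(\overline{x_{2}}^{\,q},\dots,\overline{x_{d}}^{\,q},\,w^{q}-\overline{\delta}^{\,q})R\llbracket w\rrbracket$ for all large $q=p^{e}$ (using $(w-\overline{\delta})^{q}=w^{q}-\overline{\delta}^{\,q}$), and then evaluating $w\mapsto\overline{\delta}$ — a ring homomorphism $R\llbracket w\rrbracket\to R$ since $\overline{\delta}$ lies in the maximal ideal of the complete ring $R$ — would yield $\overline{\delta}^{\,q}\in\fb^{[q]}$ for all large $q$, contradicting the fact, recorded at the outset of the proof of \cref{thm:completion}, that $F$-purity of $T/pT$ forces $\overline{\delta}^{\,q}\notin\fb^{[q]}$ for every $q$ (as $\overline{\delta}$ generates the nonzero socle of $R/\fb$). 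With all three hypotheses verified, \cref{thm:completion} applies to $T^{*}$ and supplies a local splinter $A$ with $\widehat{A}\cong T\llbracket w\rrbracket$.

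The step I expect to be the main obstacle is pinning down the parameter ideal $\fa^{*}=(p,x_{2},\dots,x_{d},w-\delta)$. The obvious alternatives $(p,x_{2},\dots,x_{d},w)$ and $(p,x_{2},\dots,x_{d},w^{2})$ have socle generators $\delta$ and $\delta w$, both lying in $\fm T\llbracket w\rrbracket$, which wrecks condition \ref{thm:completion-2}; and the prime $\fm T\llbracket w\rrbracket$ is essentially forced, since it is the only nonmaximal prime of $T\llbracket w\rrbracket$ for which \cref{lem:colon-power-series} can deliver condition \ref{thm:completion-3}. Equating $w$ with a socle generator of $T/\fa$ is precisely the device that reconciles these two requirements. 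The remaining points — the stability of $F$-purity under adjoining a power-series variable, and the socle computation for $T^{*}/\fa^{*}$ — are routine.
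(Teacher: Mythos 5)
Your proof is correct and follows essentially the same route as the paper: apply \cref{thm:completion} to $T\llbracket w\rrbracket$ with the nonmaximal prime $\fm T\llbracket w\rrbracket$ and use \cref{lem:colon-power-series} for condition \ref{thm:completion-3}. The paper keeps the parameter ideal $(\fa,w)$ and takes $\delta+w$ as socle generator (note a socle generator need only be chosen modulo the parameter ideal, so your worry that $(\fa,w)$ ``wrecks'' condition \ref{thm:completion-2} is unfounded); your choice $(\fa, w-\delta)$ with socle generator $w$ is the image of this under the change of variables $w\mapsto w-\delta$, and your evaluation-at-$\overline{\delta}$ argument is a harmless substitute for the paper's appeal to $F$-purity of $(T/pT)\llbracket w\rrbracket$ at that step.
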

\begin{proof}
    Let $\fa = (p,x_2,\dots, x_d)\subseteq T$ be a parameter ideal and note that $(\fa, w)$ is a parameter ideal of $T\llbracket w\rrbracket$. Denote $\fb:=\fa/pT$ and $\fc:=(\fa,w)/(p)$. If $\delta\in T$ generates the socle of $T/\fa$, then $\delta\notin pT$ and $\delta+w$ generates the socle of $T\llbracket w\rrbracket/(\fa,w)$. Consider the prime ideal $\fp := \fm T\llbracket w\rrbracket$. Note that $\delta+w\notin\fp$, and $\overline{\delta+w}^{p^e}\notin\fc^{[p^e]}$ for all $e\geq 1$ since we may ascend $F$-purity along $T/pT\to T/pT\llbracket w\rrbracket \cong T\llbracket w\rrbracket/pT\llbracket w\rrbracket$ by \cite[Theorem 7.4]{MP25}. Consequently, for all $e > 0$ we have $(\fc^{[p^e]}:(\overline{\delta+w})^{p^e})\subseteq \frac{(\fm, w)}{pT}$, hence $\mathcal{M}_{\fc,\overline{\delta+w}}\subseteq \frac{(\fm, w)}{pT}$. By \cref{lem:colon-power-series}, we in fact have $\mathcal{M}_{\fc,\overline{\delta+w}}\subseteq \frac{\fp}{pT}$. We may now apply \cref{thm:completion} to $T\llbracket w\rrbracket$ together with the parameter ideal $(\fa, w)$ and the non-maximal prime ideal $\fp$ to obtain the result.
\end{proof}
\begin{remark}
    The proof of \cref{cor:splinter-completion-power-series} does not adapt in a straightforward way to the case where $\pi\in T$ is an arbitrary nonzero divisor such that $T/\pi T$ is a Gorenstein $F$-pure domain. The reason is that if $T/\pi T$ is a domain of  characteristic $p > 0$, then $T\llbracket w\rrbracket$ admits a $[\fm T\llbracket w \rrbracket, \pi]$-subring if and only if  $\pi\in \sqrt{(p)}$.  
\end{remark}

In the proof of the next theorem alone, we use $(-)^{\land \fm}$ for $\fm$-adic completion and $(-)^{\land p}$ for $p$-completion.
\begin{theorem}\label{thm:splinter-implies-perfd-pure}
    If $(R,\fm,k)$ is a local splinter of mixed characteristic $(0,p)$, then $R$ is perfectoid pure.
\end{theorem}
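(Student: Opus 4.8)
The plan is to reduce to the complete case and then exhibit a pure map from $R$ to a perfectoid ring built from the absolute integral closure. First I would pass to the $\fm$-adic completion: by \cite[Lemma 2.3.1]{DT23} or the general theory of splinters, $R$ is a splinter exactly when $R \to R^+$ is cyclically pure, and I would like to work with $R^{\land \fm}$, which is a complete local domain of mixed characteristic $(0,p)$. The subtlety flagged by \cref{thm:main-theorem-2} is that $R^{\land \fm}$ need \emph{not} be a splinter, so I cannot simply replace $R$ by its completion and must instead track the purity of $R \to R^+$ directly. The key point is that since $R$ is a splinter, hence a normal domain, and $R \to R^+$ is cyclically pure (equivalently, pure in the approximately Gorenstein setting, which holds once we know $R$ is normal — or after reducing to the Gorenstein situation via the results of the previous sections), purity is detected after completion: $R^{\land\fm} \to (R^+)^{\land \fm}$ remains pure because $R^{\land\fm}$ is faithfully flat over $R$ and purity ascends along faithfully flat base change.

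Next I would produce the perfectoid algebra. Set $S := R^{\land\fm}$; then $S^+ = (R^+ \otimes_R S)^{\text{normalization}}$, or more carefully one works with a compatible absolute integral closure. By example \ref{perfd-2} of the preliminaries, the $p$-completion $(S^+)^{\land p}$ is a perfectoid ring. The composite $R \to S \to S^+ \to (S^+)^{\land p}$ is the candidate for the structure map; $(S^+)^{\land p}$ is visibly an $R$-algebra. So the remaining task is to check that this composite is pure. Purity of $R \to S$ is faithful flatness; purity of $S \to S^+$ is the splinter-after-completion statement above (here I would use cyclic purity plus the approximately Gorenstein reduction, i.e. \cite[Lemma 2.1(e)]{HH95}, to promote cyclic purity to purity, exactly as in the proof of \cref{lem:splinter-injective-hull}); and purity of $S^+ \to (S^+)^{\land p}$ should follow because $p$-adic completion of a module over a noetherian-enough base is pure, or more robustly because one can check purity of $R \to (S^+)^{\land p}$ by testing against the injective hull $E = E_R(k)$, which is $p$-power torsion and killed by a power of $\fm$, so that $E \otimes_R (S^+)^{\land p} = E \otimes_R S^+$ and the injectivity of $E \to E \otimes_R S^+$ is precisely the splinter condition transported along completion.

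The cleanest route, which I would actually carry out, is therefore the $E$-theoretic one: reduce to showing $E_R(k) \to E_R(k) \otimes_R (S^+)^{\land p}$ is injective. Because $E$ is Artinian and $p$-torsion, tensoring with the $p$-completion agrees with tensoring with $S^+$ itself, and the map $E \to E \otimes_R S^+$ is injective because $R \to R^+$ is cyclically pure (splinter hypothesis) and $R$ is approximately Gorenstein — the latter holds since we have reduced, via \cref{thm:completion} and the surrounding machinery, to $R$ normal (indeed the relevant $R$'s arising in \cref{thm:main-theorem-1} are Gorenstein), so that \cite[Lemma 2.1(e)]{HH95} applies verbatim as in \cref{lem:splinter-injective-hull}. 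The main obstacle, and the step deserving the most care, is the interaction between $p$-adic and $\fm$-adic completion: one must be sure that the perfectoid ring $(S^+)^{\land p}$ really is an $R$-algebra receiving a \emph{pure} map from $R$ and not merely from $S$, and that the Artinian-module computation $E \otimes_R (S^+)^{\land p} \cong E \otimes_R S^+$ is legitimate — this is where the hypothesis $p \in \fm$ and the torsion nature of $E$ are essential, and it is the only place where the mixed-characteristic geometry (as opposed to formal nonsense about splinters) genuinely enters.
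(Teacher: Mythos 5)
There is a genuine gap, and it sits exactly where you flagged ``the step deserving the most care.'' Your argument routes purity through $S^+=(\widehat{R})^+$, the absolute integral closure of the \emph{completion}, and the decisive claim is that $E\to E\otimes_R S^+$ is injective ``because $R\to R^+$ is cyclically pure.'' But since $E_R(k)=E_{\widehat{R}}(k)$, injectivity of $E\to E\otimes_R\widehat{R}^+$ is, by \cite[Lemma 2.1(e)]{HH95} applied over the complete ring $\widehat{R}$, equivalent to $\widehat{R}$ being a splinter --- that is, to the splinter property passing to $\fm$-adic completion. \cref{thm:main-theorem-2} of this paper exhibits a splinter whose completion is not a splinter, so this step cannot be justified in the way you propose: the splinter hypothesis only controls $E\to E\otimes_R R^+$, and there is no a priori injectivity of $E\otimes_R R^+\to E\otimes_R\widehat{R}^+$. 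The same conflation appears earlier when you assert that $\widehat{R}\to (R^+)^{\land\fm}$ is pure ``by faithfully flat base change'': base change gives purity of $\widehat{R}\to R^+\otimes_R\widehat{R}$, but $(R^+)^{\land\fm}$ is not that base change ($R^+$ is not module-finite over $R$, so completion and tensoring do not commute), and purity of the first factor of a composite does not yield purity of the composite.

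The repair --- and the paper's actual argument --- is to never leave $R^+$: the target perfectoid ring is $(R^+)^{\land p}$, the $p$-completion of the absolute integral closure of $R$ itself, which is perfectoid by \cite[Example 3.8(2)]{BIM19}. To obtain purity of $R\to(R^+)^{\land p}$, Matlis-dualize the injection $E\hookrightarrow E\otimes_R R^+$ to get a surjection $\Hom_R(R^+,\widehat{R})\twoheadrightarrow\Hom_R(R,\widehat{R})$, i.e.\ an honest $R$-linear extension $\psi\colon R^+\to\widehat{R}$ of the completion map; $\fm$-adically completing $\psi$ splits $\widehat{R}\to(R^+)^{\land\fm}$, hence $R\to(R^+)^{\land\fm}$ is pure, and since this map factors through $(R^+)^{\land p}$, the map $R\to(R^+)^{\land p}$ is pure. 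Your observation that tensoring $E$ against a $p$-completion changes nothing is correct and is morally why the factorization through the $p$-completion works, but the purity input must come from $R^+$, not from $\widehat{R}^+$.
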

\begin{proof}
    Let $E=E_R(k)$ denote an injective hull of the residue field, and let $(-)^\vee:=\Hom_R(-,E)$ denote the Matlis dual. $R\to R^+$ is pure by assumption, so $E\to E\otimes_R R^+$ is injective by \cite[Lemma 2.1(e)]{HH95}. Applying $(-)^\vee$ and using $\Hom$-$\otimes$ adjunction, we have an $R$-module surjection
\begin{align}
    \Hom_R(R^+,R^{\land \fm})&\cong \Hom_R(R^+,\Hom_R(E,E))\\
     & \cong \Hom_R(E\otimes_R R^+,E)\twoheadrightarrow\Hom_R(E,E)\cong R^{\land \fm}\cong \Hom_R(R,R^{\land \fm}).
\end{align}
It follows that the $\fm$-adic completion map $R\to R^{\land \fm}$ extends to a map $\psi: R^+\to R^{\land \fm}$, hence the $\fm$-adic completion map factors as follows.
\begin{equation*}
    \begin{tikzcd}
        R\arrow[r,"\varphi"]\arrow[rr,bend right] & R^+\arrow[r, "\psi"] & R^{\land \fm}
    \end{tikzcd}
\end{equation*}
Completing again, the identity map on $R^{\land \fm}$ factors as $R^{\land \fm}\xrightarrow{\varphi^{\land \fm}} (R^+)^{\land \fm}\to R^{\land \fm}$, so $\varphi^{\land\fm}$ is pure. Consider the commutative diagram
\begin{equation*}
    \begin{tikzcd}
        R\arrow[r]\arrow[dr]& R^{\land \fm}\arrow[r,"\varphi^{\land \fm}"]& (R^+)^{\land\fm}\\
        & (R^+)^{\land p}\arrow[ur]&
    \end{tikzcd}
\end{equation*}
whose top row is pure. It follows that $R\to (R^+)^{\land p}$ is pure, the target of which is a perfectoid ring by \cite[Example 3.8(2)]{BIM19}. Then $R$ is perfectoid pure, as desired.
\end{proof}

\begin{theorem}\label{thm:perfd-pure-deformation}
    Let $(R,\fm,k)$ be a $d$-dimensional complete local ring of mixed characteristic $(0,p)$, and suppose that $R/pR$ is an $F$-pure Gorenstein domain. If $H^d_\fn(A^+)=0$ for every local subring $(A,\fn)\subseteq R\llbracket x\rrbracket$ such that $\widehat{A}\cong R\llbracket x\rrbracket$ (for example, if \cref{conj:CM} holds), then $R$ is perfectoid pure.
\end{theorem}
\begin{proof}
   If $\dim R<2$ then $R$ is regular. Hence we may assume that we are in the setting of \cref{cor:splinter-completion-power-series}, so there exists a local subring $A\subseteq T\llbracket w\rrbracket$ such that $\widehat{A}\cong R\llbracket w\rrbracket$ and such that $A/pA$ is weakly $F$-regular (hence a splinter by \cite[Remark 2.4.1(2)]{DT23}). By assumption, we may apply \cref{lem:splinter lifts to splinter} to obtain that $A$ is a splinter and hence perfectoid pure by \cref{thm:splinter-implies-perfd-pure}. It follows that $R\llbracket w\rrbracket$ is perfectoid pure by \cite[Lemma 4.8]{BMPSTWW24}, so we may conclude that $R$ is perfectoid pure by \cite[Lemma 4.6]{BMPSTWW24}.
\end{proof}

Note that the converse of \cref{thm:perfd-pure-deformation} fails, even in the complete intersection case. Indeed, if $p\equiv 2\mod 3$, then the ring $$T = \frac{\Z_p\llbracket x,y,z\rrbracket}{(x^3+y^3+z^3)}$$ is perfectoid pure \cite[Proposition 6.10]{Yos25} whereas $T/pT$ is not $F$-pure.

We now conclude with a proof of \cref{thm:main-theorem-2}.
\begin{theorem}\label{thm:conj1 iff conj2}
    The following two assertions are equivalent.
\begin{enumerate}[label=(\Roman*)]
    \item Let $(R,\fm)$ be a complete local Gorenstein ring of mixed characteristic $(0,p)$. If $R/pR$ is an $F$-pure domain, then $R$ is perfectoid pure.\label{thm:B-2-1}
    \item Let $(R,\fm)$ be a local Gorenstein ring of mixed characteristic $(0,p)$. If $R/pR$ is weakly $F$-regular and analytically irreducible, then $R$ is perfectoid pure.\label{thm:B-2-2}
\end{enumerate}
\end{theorem}
\begin{proof}
    The backward direction is identical to the proof of \cref{thm:perfd-pure-deformation} with \ref{thm:B-2-2} in place of \cref{lem:splinter lifts to splinter} and \cref{thm:splinter-implies-perfd-pure}. For the forward direction, suppose that $R$ is as in \ref{thm:B-2-2} and consider the following diagram.
\begin{equation*}
    \begin{tikzcd}
        R\arrow[r]\arrow[d]& \widehat{R}\arrow[d]\\
        R/pR\arrow[r] & \widehat{R}/p \widehat{R}
    \end{tikzcd}
\end{equation*}
$R/pR$ is $F$-pure by \cite[Remark 1.6]{FW89} so $\widehat{R}/p\widehat{R}$ is an $F$-pure domain by assumption and \cite[Corollary 2.3]{MP25}. By \ref{thm:B-2-1} we have that $\widehat{R}$ is perfectoid pure, hence so is $R$ by \cite[Lemma 4.8]{BMPSTWW24} as desired.
\end{proof}

\printbibliography

\end{document}